\g@addto@macro\bfseries{\boldmath}
\theoremstyle{plain}
    \newtheorem{theorem}[equation]{Theorem}
    \newtheorem{lemma}[equation]{Lemma}
    \newtheorem{corollary}[equation]{Corollary}
    \newtheorem*{theorem*}{Theorem}
    \newtheorem*{proposition*}{Proposition}
    \newtheorem*{corollary*}{Corollary}
    \newtheorem*{lemma*}{Lemma}
    \newtheorem*{conjecture*}{Conjecture}
    \newtheorem{definition-theorem}[equation]{Definition/Theorem}
    \newtheorem{definition-lemma}[equation]{Definition/Lemma}
\theoremstyle{definition}
    \newtheorem{example}[equation]{Example}
    \newtheorem*{definition*}{Definition}
    \newcommand{\N}{\mathbb{N}}
    \newcommand{\Z}{\mathbb{Z}}
   \newcommand{\R}{\mathbb{R}}
    \newcommand{\trace}{\operatorname{t}}
    \newcommand{\Trace}{\operatorname{T}}
    \newcommand{\TRACE}{\mathfrak{T}}
\title{If $R^m\cong R^n$, must $m=n$?}
\author{Tyrone Crisp}
\address{Department of Mathematics \& Statistics, University of Maine.
5752 Neville Hall, Room 333.
Orono, ME 04469 USA}
\email{tyrone.crisp@maine.edu}
\date{January 2022}
\begin{document}

\begin{abstract}
A fundamental theorem of linear algebra asserts that every basis for the vector space $\R^n$ has $n$ elements. In this expository note we present a theorem of W.~G.~Leavitt describing one way in which this \emph{invariant basis number} property can fail when one does linear algebra over rings, rather than over fields. We give a proof of Leavitt's theorem that combines ideas of  P.~M.~Cohn and A.~L.~S.~Corner into an elementary form requiring only a nodding acquaintance with matrices and modular arithmetic.
\end{abstract}

\maketitle

\subsection*{The invariant basis number property.} We learn and teach in our introductory linear algebra classes that over the real numbers $\R$ only square matrices can be invertible. Indeed, if matrices $X\in M_{m\times n}(\R)$ and $Y\in M_{n\times m}(\R)$ satisfy $XY= I_{m}$ (the $m\times m$ identity matrix) and $YX= I_{n}$, then applying the trace (sum of the diagonal entries) gives
\[
m=\operatorname{trace}( I_{m}) = \operatorname{trace}(XY)=\operatorname{trace}(YX)= \operatorname{trace}( I_{n})=n.
\]
Since a matrix is invertible if and only if its rows form a basis for the appropriate $\R$-vector space we can restate this result by saying that all bases of $\R^n$ have exactly $n$ elements. The same theorem (though not the same proof) applies when $\R$ is replaced by any field: see \cite[\S 8]{Halmos} for an elegant field-independent treatment.

Linear algebra over rings---the theory of modules---is far more complicated than linear algebra over fields, starting with the fact that not every module has a basis. But for modules that do have a basis, do all bases still have the same number of elements? 

To phrase the question in a more precise and elementary form, let $R$ be a ring, not necessarily commutative but always assumed to be associative and to have a multiplicative identity element. For each positive integer $n$ we consider the set $R^n = \{(r_1,\ldots, r_n)\ |\ r_i\in R\}$, equipped with operations of component-wise addition and component-wise multiplication on the left by elements of $R$. Just as in the case of $R=\R$ we say that a mapping $F: R^m\to R^n$ is \emph{linear} if it satisfies
\[
\begin{aligned}
 F( r_1+s_1,\ldots, r_m+s_m) &= F(r_1,\ldots,r_m) + F(s_1,\ldots, s_m)\\ \text{and} \qquad F(rr_1,\ldots, rr_m) &= rF(r_1,\ldots, r_m)
\end{aligned}
\]
for all $r, r_i,s_i\in R$. The standard proof from Matrices 101, with a little extra care taken to account for the fact that our ring $R$ might be noncommutative, shows that every linear map $R^m\to R^n$ has the form $(r_1,\ldots, r_m)\mapsto (r_1,\ldots,r_m)X$ for some $m\times n$ matrix $X$ with entries in $R$. We write $R^m\cong R^n$ if there exists a bijective linear map from $R^m$ to $R^n$, or equivalently, if there are matrices $X\in M_{m\times n}(R)$ and $Y\in M_{n\times m}(R)$ satisfying $XY= I_{m}$ and $YX= I_{n}$. Our question now becomes, if $R^m\cong R^n$ must $m=n$?

We noted above that if $R$ is a field then, yes, $R^m\cong R^n$ implies $m=n$. Using this and  the fact that every commutative ring has a quotient that is a field (Krull's theorem), one can also prove that if $R$ is any commutative ring then $R^m\cong R^n$ implies $m=n$. In fact, most of the rings that one meets in a first course of algebra have this property of \emph{invariant basis number}: for instance,  all matrix rings $M_{d\times d}(R)$ and group rings $R[G]$ over a commutative ring $R$ have invariant basis number, as do all subrings of rings with this property. See \cite[Section 2]{Leavitt-module-type} for proofs of these assertions and more. The following example of a ring that does not have invariant basis number was pointed out by Dieudonn\'e \cite{Dieudonne} and Everett \cite{Everett}:

\begin{example}\label{example:11}
Let $\N$ be the set of positive integers. Consider the set $R$ of $\N\times \N$ matrices $A=(a_{i,j})_{i,j=1}^\infty$, with integer entries $a_{i,j}\in \Z$, having only a finite number of nonzero entries in each row and in each column. We add matrices entry-wise, and we multiply them by using the formula $(AB)_{i,j} = \sum_{k=1}^\infty a_{i,k}b_{k,j}$. Since for each $i$ the $i$th row of $A$ contains only finitely many nonzero entries $a_{i,k}$,  this infinite sum has only finitely many nonzero terms, so it is well-defined as an actual sum rather than as a limit. (This will be the case with all of the infinite sums that appear in this paper.) Moreover,  for each of the finitely many $k$ with $a_{i,k}\neq 0$ there are only finitely many $j$ with $b_{k,j}\neq 0$, and so the $i$th row of $AB$ contains only finitely many nonzero entries. A similar argument shows that each column of $AB$ contains only finitely many nonzero entries,  so $AB$ lies in $R$. Thus $R$ is a ring, with the $\N\times \N$ identity matrix $I_{\infty}$ as multiplicative identity.

For each matrix $A\in R$ we let $A_{\mathrm{odd}}\in R$ be the matrix whose columns are the odd-numbered columns of $A$, and we let $A_{\mathrm{even}}\in R$ be the matrix whose columns are the even-numbered columns of $A$. The map $R\to R^2$ given by $A\mapsto (A_{\mathrm{odd}}, A_{\mathrm{even}})$ is linear, because the $j$th column of a matrix product $BA$ is the product of $B$ with the $j$th column of $A$. Since each column of $A$ appears in precisely one of the matrices $A_{\mathrm{odd}}$ and $A_{\mathrm{even}}$ this map is  a bijection, and so we have $R\cong R^2$. Now the map $(A,B)\mapsto(A_{\mathrm{odd}},A_{\mathrm{even}},B)$ is an isomorphism $R^2\xrightarrow{\cong} R^3$, and proceeding by induction we see  that $R^m\cong R^n$ for all positive integers $m,n$.
\end{example}

This example represents an extreme failure of the invariant basis number property, and it is natural to wonder whether a ring can fail to have this property in a less extreme way: is there a ring $R$ having $R^m\cong R^n$ for some positive integers $m\neq n$, but not for all  $m$ and $n$? The following special case of {\cite[Theorem 8]{Leavitt-module-type}} gives an affirmative answer to this question.

\begin{theorem}[Leavitt]\label{thm:Leavitt}
For each positive integer $k$ there is a ring $R$ with the property that  $R^m\cong R^n$ if and only if $m\equiv n\bmod k$.
\end{theorem}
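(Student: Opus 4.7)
The plan is to emulate the classical trace argument given in the introduction of the paper, but with the base field $\R$ replaced by a carefully constructed ring $R = R(k)$ and the classical integer-valued trace replaced by a $\Z/k\Z$-valued ``trace''.

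The first step is to exhibit a ring $R$ containing elements $x_1, \dots, x_{k+1}$ and $y_1, \dots, y_{k+1}$ satisfying the relations
\begin{equation*}
\sum_{i=1}^{k+1} x_i y_i \;=\; 1 \qquad\text{and}\qquad y_i x_j \;=\; \delta_{ij}.
\end{equation*}
A concrete model is to take a free abelian group $V$ on $\N$, fix a bijection of $\N$ with a disjoint union of $k+1$ copies of itself, and let $x_i$ and $y_i$ be the $i$th inclusion and the $i$th projection for that decomposition, with $R$ the subring of $\End_{\Z}(V)$ they generate. The matrices $X = [x_1 \ \cdots\ x_{k+1}] \in M_{1\times(k+1)}(R)$ and $Y = [y_1 \ \cdots\ y_{k+1}]^{T} \in M_{(k+1)\times 1}(R)$ then satisfy $XY = I_1$ and $YX = I_{k+1}$, so that $R \cong R^{k+1}$. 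Applying this isomorphism in one coordinate of a direct sum and iterating gives $R^m \cong R^{m+k}$ for every $m \geq 1$, from which the ``if'' direction of the theorem follows by transitivity.

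The converse is the main obstacle. The plan is to construct a function
\begin{equation*}
\tau \colon \bigcup_{n \geq 1} M_{n\times n}(R) \longrightarrow \Z/k\Z
\end{equation*}
satisfying (i) $\tau(I_n) \equiv n \pmod{k}$ for every $n$, and (ii) $\tau(AB) = \tau(BA)$ whenever $A$ and $B$ are rectangular matrices over $R$ for which both products are defined. Given such a $\tau$, any matrices $X, Y$ with $XY = I_m$ and $YX = I_n$ would yield
\begin{equation*}
m \;\equiv\; \tau(I_m) \;=\; \tau(XY) \;=\; \tau(YX) \;=\; \tau(I_n) \;\equiv\; n \pmod{k},
\end{equation*}
in direct parallel to the opening argument of the paper over $\R$.

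The difficulty is that no such $\tau$ can exist on the ambient ring $\End_{\Z}(V)$---by Example \ref{example:11}, that ring already admits isomorphisms $W^m \cong W^n$ for \emph{all} positive $m, n$, which would contradict the existence of a mod-$k$ trace for $k>1$---so $\tau$ must be defined only on the subring $R$, and its well-definedness must use the specific relations defining $R$. Following Cohn, my approach is to first establish a normal form expressing every element of $R$ as a $\Z$-linear combination of monomials $x_{i_1}\cdots x_{i_p}\, y_{j_1}\cdots y_{j_q}$, subject only to the family of relations generated by $\sum_i x_i y_i = 1$, and then to define $\tau$ on these monomials by an explicit combinatorial formula and extend to matrices by summing along the diagonal. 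The cyclic property (ii) would then be verified by direct calculation; the essential point is that the relation $\sum_i x_i y_i = 1$ forces $(k+1)\,\tau(1) = \tau(1)$, which is exactly what requires $\tau$ to take values in $\Z/k\Z$ rather than $\Z$ and produces the modular arithmetic appearing in the theorem.
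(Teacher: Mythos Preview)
Your plan is essentially Cohn's original argument on the Leavitt algebra $L_{k+1}$, and it does work, but the paper takes a genuinely different route. Rather than using the universal ring generated by the $x_i$ and $y_j$ and establishing a normal form for its elements, the paper uses Corner's concrete ring $R_k$ of row- and column-finite $\N\times\N$ integer matrices in which all but finitely many $k\times k$ blocks are scalar. The payoff is that the $\Z/k\Z$-valued trace can be written down directly as the mod-$k$ reduction of the sum of the diagonal entries: this is well-defined precisely because scalar $k\times k$ blocks have trace divisible by $k$, and the cyclic property $\TRACE_k(XY)=\TRACE_k(YX)$ follows from the ordinary trace identity block by block. Thus the normal-form step---which is where essentially all of the work in your plan lies---is bypassed entirely. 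Your approach has the compensating advantage that it treats the universal object $L_{k+1}$ directly, so the statement about Leavitt's own ring comes for free; the paper instead proves the theorem first for $R_k$ and only afterwards transfers it to $L_{k+1}$ via the universal property. One small wrinkle in your write-up: you introduce $R$ as a concrete subring of $\End_{\Z}(V)$ but then invoke a normal form ``subject only to'' the Leavitt relations. That is tantamount to asserting that the representation of $L_{k+1}$ on $V$ is faithful, which is true but is essentially the same work as the normal-form theorem for the abstract algebra; you should commit to one setting or the other.
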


This theorem is just one of Leavitt's discoveries about the invariant basis number property and the ways in which this property can fail. Leavitt also proved in \cite{Leavitt-modules-without}, for instance, that for each positive integer $k$ there is a ring $R$ for which $R^m\cong R^n$ if and only if $\min(m,n)\geq k$. This last result turns out to have quite a different flavour to Theorem \ref{thm:Leavitt},  which will be our focus. See \cite{Berrick-Keating} for an accessible introduction to some aspects of the invariant basis number property that are not discussed here.

Leavitt proved Theorem \ref{thm:Leavitt} by an intricate calculation inside a certain universal ring; we shall briefly discuss Leavitt's rings below. Cohn \cite{Cohn} and Corner \cite{Corner-trace} later found shorter proofs of Theorem \ref{thm:Leavitt} using trace functions on Leavitt's ring. In \cite{Corner} Corner gave yet another proof of Leavitt's theorem, that replaces Leavitt's universal ring by a concrete ring of matrices, and that eschews the use of traces in order to make the proof applicable to algebras over arbitrary fields. 

We shall present a short proof of Theorem \ref{thm:Leavitt} that applies the trace techniques of \cite{Cohn} and \cite{Corner-trace} to the matrix ring of \cite{Corner}. We make no claim to novelty for the ideas behind the proof, but we hope that the ruthlessly elementary path taken here  will have pedagogical value in making Leavitt's striking result, and the essential simplicity of Cohn's and Corner's arguments, accessible to mathematicians in the very early stages of their algebraic careers. 

\subsection*{A proof of Leavitt's theorem.}  Fix a positive integer $k$, and consider as in Example \ref{example:11} the ring  $R$ of $\N\times \N$ matrices  with only finitely many nonzero entries in each row and in each column. We partition the matrices in $R$ into blocks, as follows:
\begin{equation*}\label{eq:partition}
{\small
 \left[\begin{array}{@{}c|ccccc|ccccc|ccccc|ccc @{}}
1\times 1 & \multicolumn{5}{c|}{1\times k} & \multicolumn{5}{c|}{1\times k} & \multicolumn{5}{c|}{1\times k}  &   \multicolumn{3}{c}{\cdots} \\ \hline 
&&&&&&&&&&&&&&&&&& \\
 &&&&&&&&&&&&&&&&&& \\
{\smash{\raisebox{\normalbaselineskip}{$k\times 1$}}} &  \multicolumn{5}{c|}{\smash{\raisebox{\normalbaselineskip}{$k\times k$}}} & \multicolumn{5}{c|}{\smash{\raisebox{\normalbaselineskip}{$k\times k$}}} & \multicolumn{5}{c|}{\smash{\raisebox{\normalbaselineskip}{$k\times k$}}} & \multicolumn{3}{c}{\smash{\raisebox{\normalbaselineskip}{$\cdots$}}} \\ \hline
&&&&&&&&&&&&&&&&&& \\
 &&&&&&&&&&&&&&&&&& \\
{\smash{\raisebox{\normalbaselineskip}{$k\times 1$}}} &  \multicolumn{5}{c|}{\smash{\raisebox{\normalbaselineskip}{$k\times k$}}} & \multicolumn{5}{c|}{\smash{\raisebox{\normalbaselineskip}{$k\times k$}}} & \multicolumn{5}{c|}{\smash{\raisebox{\normalbaselineskip}{$k\times k$}}} & \multicolumn{3}{c}{\smash{\raisebox{\normalbaselineskip}{$\cdots$}}} \\ \hline
&&&&&&&&&&&&&&&&&& \\
{\smash{\raisebox{.5\normalbaselineskip}{$\vdots$}}} &  \multicolumn{5}{c|}{\smash{\raisebox{.5\normalbaselineskip}{$\vdots$}}} & \multicolumn{5}{c|}{\smash{\raisebox{.5\normalbaselineskip}{$\vdots$}}} & \multicolumn{5}{c|}{\smash{\raisebox{.5\normalbaselineskip}{$\vdots$}}} & \multicolumn{3}{c}{\smash{\raisebox{.5\normalbaselineskip}{$\cdots$}}}  \\ 

\end{array}\right]
}
\end{equation*}
We write $A^{m,n}$ to denote the $m,n$ block of the matrix $A$, where as usual $m$ is the vertical coordinate and $n$ is the horizontal coordinate. So for example, for $k=1$ each $A^{m,n}$ is the $1\times 1$ matrix $\begin{bmatrix} a_{m,n} \end{bmatrix}$, while for $k=2$ and for the matrix
\begin{equation*}\label{eq:r-example}
A =  \left[\begin{array}{@{}c|cc|cc|cc|cc|cc|cc @{}}
1 & 2 & 3 & 4 & 5 & 6 & 7 & 8 & 9 & 10 & 11 & 12 & \cdots \\ \hline 
2 & 4 & 6 & 8 & 10 & 12 & 14 & 16 & 18 & 20 & 22 & 24 &  \cdots \\
3 & 6 & 9 & 12 & 15 & 18 & 21 & 24 & 27 & 30 & 33 & 36 &  \cdots \\ \hline
4 & 8 & 12 & 16 & 20 & 24 & 28 & 32 & 36 & 40 & 44 & 48 &  \cdots \\
5 & 10 & 15 & 20 & 25 & 30 & 35 & 40 & 45 & 50 & 55 & 60 & \cdots \\ \hline
\vdots & \vdots & \vdots & \vdots & \vdots & \vdots &  \vdots & \vdots &  \vdots & \vdots &  \vdots & \vdots & \ddots
\end{array}\right]
\end{equation*}
(where all entries not pictured are zero) we have
\[
A^{1,1} = \begin{bmatrix} 1 \end{bmatrix},\quad A^{1,3}= \begin{bmatrix} 4 & 5 \end{bmatrix},\quad A^{2,1}=\begin{bmatrix} 2\\ 3\end{bmatrix},\quad A^{3,4}= \begin{bmatrix} 24 & 28 \\ 30 & 35 \end{bmatrix},
\] 
and so on.

\begin{definition*} Let $R_k$  be the subset of $R$ consisting of those matrices $A$ in which all but finitely many of the $k\times k$ blocks $A^{m,n}$ (for $m,n\geq 2$) are scalar matrices: that is, they have the form $c  I_{k}$ for some integer $c$, depending on $m$ and $n$.
\end{definition*} 

It is a straightforward matter to show that the blocks of a product of two matrices are given by the formula $(AB)^{m,n} = \sum_{p=1}^\infty A^{m,p}B^{p,n}$ where, as always, only finitely many of the summands are nonzero. Since sums and products of scalar matrices are again scalar matrices, $R_k$ is a subring of $R$. All of the $k\times k$ blocks of the identity matrix  are scalars, so $R_k$ contains the identity. When $k=1$ the restriction on the $k\times k$ blocks is no restriction at all, and so $R_1=R$.

\begin{lemma}\label{lem:1k}
 If $m$ and $n$ are positive integers with $m\equiv n\bmod k$, then $R_k^m \cong R_k^n$.
\end{lemma}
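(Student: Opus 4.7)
The plan is to reduce to the single case $R_k \cong R_k^{k+1}$ and then prove that case by exhibiting a row $X = (X_0, \ldots, X_k) \in M_{1\times(k+1)}(R_k)$ and a column $Y \in M_{(k+1)\times 1}(R_k)$ with entries $Y_0, \ldots, Y_k \in R_k$, satisfying $XY = I_\infty$ (the identity of $R_k$) and $YX = I_{k+1}$; equivalently, $\sum_i X_i Y_i = I_\infty$ and $Y_i X_j = \delta_{ij} I_\infty$. The reduction is immediate: given $m \equiv n \pmod k$ with $n = m + jk$ for some $j \geq 0$ (after swapping if necessary), applying this isomorphism to one factor of $R_k^s$ yields $R_k^s \cong R_k^{s+k}$, and iterating $j$ times gives $R_k^m \cong R_k^n$.

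For the main case I follow Example~\ref{example:11}: pick $k+1$ injections $\sigma_0, \ldots, \sigma_k : \N \to \N$ whose images are pairwise disjoint and cover $\N$, let $Y_i \in R$ be the matrix with $(Y_i)_{j, \sigma_i(j)} = 1$ and other entries zero, and set $X_i = Y_i^T$. The two matrix identities then follow mechanically from the injectivity and partition property of the $\sigma_i$'s, by exactly the same one-line calculation that established $R \cong R^2$ in the example. The substantive task is to choose the $\sigma_i$'s so that the resulting $X_i$ and $Y_i$ actually lie in the subring $R_k$.

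I would arrange this by requiring that each $\sigma_i$ send every $k$-index block $G_m = \{(m-2)k + 2, \ldots, (m-1)k + 1\}$ (for $m \geq 2$) identically onto some other such block $G_{\pi_i(m)}$; this forces every $k \times k$ block of $Y_i$ and $X_i$ in positions $(m,n)$ with $m, n \geq 2$ to be either $I_k$ or $0$, and hence scalar. The explicit formula $\pi_i(m) = (k+1)(m-2) + i + 3$ provides a bijection $(i, m) \mapsto \pi_i(m)$ from $\{0, \ldots, k\} \times \{2, 3, \ldots\}$ onto $\{3, 4, \ldots\}$, which leaves the ``corner'' indices $G_1 \cup G_2 = \{1, 2, \ldots, k+1\}$ uncovered; these I absorb by setting $\sigma_i(1) = i + 1$ for $i = 0, \ldots, k$.

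The main obstacle is this corner bookkeeping. Because the block decomposition of $R_k$ treats the first row and column asymmetrically from the rest, a naive ``shift the $k$-groups'' scheme cannot by itself produce a partition of $\N$, and the $k+1$ values $\sigma_0(1), \ldots, \sigma_k(1)$ must be chosen carefully to cover the leftover indices $\{1, 2, \ldots, k+1\}$ without breaking either the injectivity of the $\sigma_i$'s or the block-preservation property. Once the $\sigma_i$'s are in place, verifying that each $X_i, Y_i$ lies in $R_k$ and that the orthogonality and completeness relations hold reduces to a direct block-by-block inspection.
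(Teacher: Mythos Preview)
Your proposal is correct and follows essentially the same route as the paper: both reduce to $R_k\cong R_k^{k+1}$ and then realise that isomorphism by redistributing the first $k+1$ columns of $A$ as the first columns of the $k+1$ images, while sending each $k$-block column $m\geq 2$ to block column $(k+1)(m-2)+i+3$ of the $i$th image. Your formula $\pi_i(m)=(k+1)(m-2)+i+3$ agrees (after the shift $i=l-1$) with the paper's $A_l^{m,n}=A^{m,(k+1)(n-2)+l+2}$, so your matrices $X_i=Y_i^T$ are precisely the matrices implementing the paper's map $A\mapsto(A_1,\ldots,A_{k+1})$.
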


\begin{proof}
We first use a generalisation of the odd/even construction from Example \ref{example:11} to prove that $R_k\cong R_k^{k+1}$. For each $A\in R_k$ and each $l=1,\ldots,k+1$ we let $A_l\in R$ be the matrix whose first column is the $l$th column of $A$, and whose blocks $A_l^{m,n}$ for $n\geq 2$ are given by the formula 
\[
A_l^{m,n}=A^{m,n+l+k(n-2)}.
\] 

For instance, when $k=1$ we have $A_1=A_{\mathrm{odd}}$ and $A_2 = A_{\mathrm{even}}$, while when $k=2$ the matrix $A_1$ consists of the first column of $A$, followed by the column of width-$2$ blocks $A^{m,3}$, followed by the column of blocks $A^{m,6}$, then $A^{m,9}$, etc; the matrix $A_2$ consists of the second column of $A$ followed by the column of width-$2$ blocks $A^{m,4}$, then the column of blocks $A^{m,7}$, etc; and the matrix $A_3$ consists of the third column of $A$ followed by the column of blocks $A^{m,5}$, then $A^{m,8}$, and so on. 

Since each $k\times k$ block of $A_l$ is one of the blocks of $A$, all but finitely many of which are scalars, we have $A_l\in R_k$ for each $l$.
 Now the map $R_k\mapsto R_k^{k+1}$ given by $A\mapsto (A_1,\ldots,A_{k+1})$ is an isomorphism, for the same reason as in the case of $k=1$ considered in Example \ref{example:11}. Repeatedly applying this isomorphism to the first coordinate, again as in Example \ref{example:11}, we get an isomorphism $R_k^m \cong R_k^{m+hk}$ for each pair of positive integers $m$ and $h$, and so $R_k^m\cong R_k^n$ whenever $m\equiv n\bmod k$.
\end{proof}

Before proceeding, it is an instructive exercise to attempt to adapt the above proof to show that $R_k\cong R_k^n$ for an arbitrary positive integer $n$. One should not spend too long on the attempt, however:

\begin{theorem}\label{thm:1k} 
$R_k^m \cong R_k^n$ if and only if $m\equiv n\bmod k$. Thus the ring $R_k$ witnesses the truth of Theorem \ref{thm:Leavitt}.
\end{theorem}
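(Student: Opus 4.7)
Lemma \ref{lem:1k} gives the ``if'' direction, so I need to show that $R_k^m \cong R_k^n$ forces $m \equiv n \pmod k$. Following the trace strategy of Cohn and Corner indicated in the introduction, my plan is to construct an additive function $\tau \colon R_k \to \Z/k\Z$ with $\tau(I_{R_k}) \equiv 1 \pmod k$ and $\tau(AB) = \tau(BA)$ for all $A,B \in R_k$. Extending $\tau$ entrywise to a trace on square matrices over $R_k$ will then separate $I_m$ from $I_n$ whenever $m \not\equiv n \pmod k$.

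The definition I would use is
\[
\tau(A) \;\equiv\; A^{1,1} \;+\; \sum_{m \geq 2} \operatorname{trace}(A^{m,m}) \pmod k,
\]
reading $\operatorname{trace}$ of a $k\times k$ block as the ordinary sum of its diagonal entries. This is well-defined modulo $k$: by the defining property of $R_k$ one has $A^{m,m} = c_m I_k$ for all but finitely many $m$, and such a block contributes $k c_m \equiv 0 \pmod k$, so only finitely many summands matter. Additivity is immediate, and the identity matrix has $A^{1,1} = 1$ and $A^{m,m} = I_k$ for all $m \geq 2$, giving $\tau(I_{R_k}) \equiv 1 \pmod k$.

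The one step I expect to need real care---the main obstacle---is the cyclic property $\tau(AB) = \tau(BA)$. Expanding with the block multiplication formula $(AB)^{m,m} = \sum_p A^{m,p} B^{p,m}$ and cyclicity of the ordinary matrix trace yields, at least formally,
\[
\tau(AB) \;\equiv\; \sum_{m,p} \operatorname{trace}(A^{m,p} B^{p,m}) \;\equiv\; \tau(BA) \pmod k,
\]
but the doubly-indexed array need not be finitely supported, so interchanging the order of summation demands justification. The key observation is that once $m$ and $p$ are both large enough that the blocks $A^{m,p}$ and $B^{p,m}$ are simultaneously scalar, their product is scalar, so its trace is a multiple of $k$. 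Thus $T_{m,p} = \operatorname{trace}(A^{m,p} B^{p,m})$ is divisible by $k$ whenever both indices exceed some threshold $M_0$ depending on $A$ and $B$. Together with the row- and column-finiteness of $(T_{m,p})$---which descends from the row- and column-finiteness of matrices in $R$---this implies that for $N$ chosen sufficiently large, both $\tau(AB)$ and $\tau(BA)$ collapse modulo $k$ to the common finite sum $\sum_{m,p < N} T_{m,p}$, and hence to each other.

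Finally, for a square matrix $X$ over $R_k$ set $\tilde\tau(X) = \sum_i \tau(x_{i,i})$. The cyclic property of $\tau$ propagates term by term to give $\tilde\tau(XY) = \tilde\tau(YX)$ whenever both products are defined, so if $X \in M_{m\times n}(R_k)$ and $Y \in M_{n\times m}(R_k)$ satisfy $XY = I_m$ and $YX = I_n$, then $m \equiv \tilde\tau(I_m) = \tilde\tau(XY) = \tilde\tau(YX) = \tilde\tau(I_n) \equiv n \pmod k$. Combined with Lemma \ref{lem:1k}, this finishes the theorem.
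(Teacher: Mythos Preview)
Your proof is correct and follows essentially the same route as the paper: your $\tau$ is exactly the paper's $\Trace_k$ and your $\tilde\tau$ is its $\TRACE_k$, and the final trace computation is identical. The only difference is cosmetic---where the paper simply observes that the doubly-indexed sum $\sum_{m,p}\trace_k(A^{m,p}B^{p,m})$ has finite support in $\Z/k\Z$ (since outside a finite set of pairs both blocks are scalar or one of them vanishes), you reach the same conclusion via a threshold $M_0$ together with row- and column-finiteness; both justifications are valid and amount to the same thing.
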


One implication in Theorem \ref{thm:1k} is provided by Lemma \ref{lem:1k}. We shall prove the other implication using a modification of the argument with the trace that we used to prove that $\R^m\cong \R^n$ implies $m= n$. Some alterations are necessary because, for one thing, we now wish to use the trace to detect not whether two integers are equal, but whether they are congruent modulo $k$;  our trace will accordingly take values in the ring $\Z/k\Z$ of residue classes modulo $k$. Secondly, since we want to apply our trace to matrices with entries in the ring $R_k$---that is, matrices whose entries are themselves infinite matrices, partitioned into finite blocks---our definition will involve summing the diagonal entries at three different scales. Finally, since one of these scales involves infinite matrices we will need to be careful in how we perform the summation, and it is at this point that we shall use the condition that the elements of $R_k$ have only finitely many non-scalar blocks.

\begin{definition*}
{\bf(1)} For each finite square matrix $M$ with entries in $\Z$ we define $\trace_k(M)\in \Z/k\Z$ to be the residue class of the sum of the diagonal entries of $M$. Notice that for a $k\times k$ scalar matrix $c I_{k}$ we have $\trace_k(c I_{k}) = ck = 0$ in $\Z/k\Z$. 

\medskip

\noindent{\bf(2)} Now for each infinite matrix $A\in R_k$ we define $\Trace_k(A)\in \Z/k\Z$ to be
\[
\Trace_k(A) = \sum_{n=1}^\infty \trace_k(A^{n,n}),
\]
the sum of the residue classes  of the traces of the diagonal blocks of $A$. Since all but finitely many of these blocks are $k\times k$ scalar matrices, which have trace zero modulo $k$, the sum defining $\Trace_k(A)$ has only finitely many nonzero terms. For example, the identity matrix $I_\infty\in R_k$ has $\Trace_k(I_\infty)=1+0+0+0+\cdots = 1\in \Z/k\Z$.

\medskip

\noindent{\bf(3)} Finally, for each finite square matrix $W\in M_{d\times d}(R_k)$ with entries in $R_k$ we define $\TRACE_k(W)\in \Z/k\Z$ by
\[
\TRACE_k(W) = \sum_{l=1}^d \Trace_k\left(W_{l,l}\right),
\]
the result of applying the function $\Trace_k :R_k\to \Z/k\Z$ to the sum of the diagonal entries of $W$. For example, the $m\times m$ identity matrix $ I_{m}\in M_{m\times m}(R_k)$ has $\TRACE( I_{m})=m\Trace_k(I_\infty)=m$ in $\Z/k\Z$.
\end{definition*}

It is easy to see that our trace map $\TRACE_k$ satisfies $\TRACE_k(W+Z)=\TRACE_k(W)+\TRACE_k(Z)$. 
Examining the compatibility of $\TRACE_k$ with multiplication, we first note that a familiar property of the matrix trace gives  $\trace_k(MN)=\trace_k(NM)$ for all $M\in M_{m,n}(\Z)$ and $N\in M_{n,m}(\Z)$, and this implies that $\Trace_k(AB)=\Trace_k(BA)$ for all $A,B\in R_k$ because
\begin{multline*}
 \Trace_k(AB)=\sum_{n=1}^\infty \trace_k  (AB)^{n,n}  = \sum_{n=1}^\infty \sum_{m=1}^\infty \trace_k( A^{n,m}B^{m,n}) \\ = \sum_{m=1}^\infty \sum_{n=1}^\infty \trace_k(B^{m,n}A^{n,m}) =\sum_{m=1}^\infty \trace_k(BA)^{m,m}=\Trace_k(BA).
\end{multline*}
(Note that both of the apparently infinite sums are actually finite, so there is no problem with interchanging the order of summation.) A similar computation shows that the identity $\Trace_k(AB)=\Trace_k(BA)$ implies in turn that $\TRACE_k(XY)=\TRACE_k(YX)$ for all $X\in M_{m\times n}(R_k)$ and $Y\in M_{n\times m}(R_k)$.

Having constructed a trace function with the right properties, we now prove Theorem \ref{thm:1k} by an argument that is essentially identical to our proof that $\R^m\cong \R^n$ implies $m=n$:

\begin{proof}[Proof of Theorem \ref{thm:1k}]
Suppose that $R_k^m \cong R_k^n$ for positive integers $m,n$, so that we have matrices $X\in M_{m\times n}(R_k)$ and $Y\in M_{n\times m}(R_k)$ satisfying $XY= I_{m}$ and $YX= I_{n}$. Applying $\TRACE_k$ gives a string of equalities in $\Z/k\Z$,
\[
m=\TRACE_k( I_{m}) = \TRACE_k(XY)=\TRACE_k(YX)=\TRACE_k( I_{n})=n,
\]
showing that $m\equiv n\bmod k$.
\end{proof}

\subsection*{Leavitt's rings.} In \cite[Section 3]{Leavitt-module-type} Leavitt defined and studied, for each positive integer $p$,  a ring $L_p$ (not Leavitt's notation) having the property that a ring $R$ admits a linear isomorphism  $R\cong R^p$ if and only if it admits a $1$-preserving homomorphism of rings $L_p\to R$. 

To construct such a ring (using slightly more algebraic background than we have assumed up to this point), start with the ring $\Z\langle x_1,\ldots, x_p, y_1,\ldots, y_p\rangle$ of polynomials in noncommuting variables $x_1,\ldots,x_p,y_1,\ldots, y_p$, and form the quotient by the two-sided ideal generated by the polynomials $\sum_{i=1}^{p} x_i y_i - 1$ and $y_i x_j - \delta_{i,j}$ (where $\delta_{i,j}$ is $1$ if $i=j$ and $0$ otherwise). Call this quotient $L_p$. By the universal mapping property of quotient rings, a ring $R$ admits a $1$-preserving homomorphism $L_p \to R$ if and only if $R$ contains elements $x_1',\ldots,x_{p}',y_1',\ldots, y_{p}'\in R$ satisfying the relations $\sum_{i=1}^{p} x'_i y'_i=1$ and $y'_i x'_j = \delta_{i,j}$. These last relations are equivalent to the relations $XY= I_{1}$ and $YX= I_{p}$ on the matrices
\[
X = \begin{bmatrix} x'_1 & \cdots & x'_{p}\end{bmatrix}\quad \text{and}\quad Y = \begin{bmatrix} y'_1 \\ \vdots \\ y'_{p}\end{bmatrix},
\]
and we observed above that the existence of matrices over $R$ satisfying these relations is equivalent to the existence of an isomorphism $R\cong R^{p}$. 

Since $L_p$ itself admits the identity morphism $L_p\to L_p$, we have  $L_p\cong L_p^p$. 
It is not obvious from the construction that $L_p\not\cong L_p^h$ for $1<h<p$, but this follows from Theorem   \ref{thm:1k} by an argument of Leavitt's (cf.~\cite[Theorem 2]{Leavitt-module-type}):

\begin{corollary}
Let $p$ be a positive integer, and let $L_p$ be the  ring defined above. Then $L_p^m\cong L_p^n$ if and only if $m\equiv n \bmod (p-1)$.
\end{corollary}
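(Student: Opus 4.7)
The plan is to leverage Theorem \ref{thm:1k} by means of the universal property of $L_p$, taking $k = p - 1$. Throughout, I assume $p \geq 2$; the case $p = 1$ is degenerate, since $L_1$ is the commutative ring $\Z[t,t^{-1}]$, which has invariant basis number by Krull's theorem, matching the condition $m \equiv n \bmod 0$ (i.e.\ $m = n$).

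For the ``if'' direction, I would use that $L_p \cong L_p^p$ by construction, together with the same iteration trick employed in the proof of Lemma \ref{lem:1k}. Applying the isomorphism $L_p \cong L_p^p$ to one coordinate of $L_p^m$ produces an isomorphism $L_p^m \cong L_p^{m + (p-1)}$, and repeating this $h$ times gives $L_p^m \cong L_p^{m + h(p-1)}$ for every $h \geq 0$. Combined with the symmetry of the relation $\cong$, this yields $L_p^m \cong L_p^n$ whenever $m \equiv n \bmod (p-1)$.

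For the ``only if'' direction, I would first observe that by Lemma \ref{lem:1k} (with $k = p-1$) we have $R_{p-1} \cong R_{p-1}^{p}$, since $1 \equiv p \bmod (p-1)$. The universal property of $L_p$ recalled just before the corollary then produces a $1$-preserving ring homomorphism $\varphi \colon L_p \to R_{p-1}$. Given matrices $X \in M_{m \times n}(L_p)$ and $Y \in M_{n \times m}(L_p)$ satisfying $XY = I_m$ and $YX = I_n$, applying $\varphi$ entry-wise yields matrices $\varphi(X) \in M_{m \times n}(R_{p-1})$ and $\varphi(Y) \in M_{n \times m}(R_{p-1})$ satisfying $\varphi(X)\varphi(Y) = I_m$ and $\varphi(Y)\varphi(X) = I_n$ (since $\varphi$ respects the ring operations and the identity). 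Hence $R_{p-1}^m \cong R_{p-1}^n$, and Theorem \ref{thm:1k} forces $m \equiv n \bmod (p-1)$.

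There is no real obstacle here: the entire argument is a direct combination of the universal property of $L_p$ with Theorem \ref{thm:1k}, and the only ``clever'' step is recognising that $R_{p-1}$ provides a concrete target ring in which the universal relation $R \cong R^p$ is realised, so that $L_p$ maps into it.
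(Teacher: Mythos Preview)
Your proposal is correct and follows essentially the same route as the paper's own proof: handle $p=1$ via commutativity of $L_1$, obtain the ``if'' direction by iterating $L_p\cong L_p^p$, and obtain the ``only if'' direction by pushing the matrices $X,Y$ forward along the universal map $L_p\to R_{p-1}$ furnished by Lemma \ref{lem:1k} and then invoking Theorem \ref{thm:1k}. There is nothing to add.
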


\begin{proof}
When $p=1$ the relations $x_1y_1=y_1x_1=1$ ensure that the ring $L_1$ is commutative (it is the ring of  Laurent polynomials in one variable with integer coefficients), and so $L_1$ has  invariant basis number: $L_1^m\cong L_1^n$ if and only if $m=n$, or in other words, if and only if $m\equiv n \bmod 0$. 

Now suppose $p\geq 2$.  We noted above that $L_p\cong L_p^{p}$, and  an inductive argument as in Lemma \ref{lem:1k} shows that  $L_p^m \cong L_p^n$ whenever $m\equiv n \bmod(p-1)$. Conversely, suppose that $L_p^m\cong L_p^n$ for some positive integers $m$ and $n$, so that we have matrices $X\in M_{m,n}(L_p)$ and $Y\in M_{n,m}(L_p)$ with $XY= I_{m}$ and $YX= I_{n}$. The fact that $R_{p-1}\cong R_{p-1}^{p}$ (Lemma \ref{lem:1k}) ensures that we have a $1$-preserving homomorphism of rings $f:L_p\to R_{p-1}$, and applying this homomorphism to each entry of the matrices $X$ and $Y$ gives matrices $f(X)\in M_{m,n}(R_{p-1})$ and $f(Y)\in M_{n,m}(R_{p-1})$ satisfying $f(X)f(Y)= I_{m}$ and $f(Y)f(X)= I_{n}$. These matrices yield a linear isomorphism $R_{p-1}^m\cong R_{p-1}^n$, and so Theorem \ref{thm:1k}  implies that $m\equiv n \bmod(p-1)$.
\end{proof}

\subsection*{Outlook.} The past two decades have seen an explosion of interest in Leavitt's rings and subsequent elaborations: at the time of writing, Google Scholar lists 30 citations of \cite{Leavitt-module-type} from the twentieth century, and 225 from the twenty-first. The reader who is curious about these developments might begin with \cite{Abrams-whatis} and proceed to \cite{Abrams-decade}.

\bibliographystyle{plain}
\bibliography{IBN.bib}

 \end{document}